\definecolor{gray}{rgb}{0.25, 0.25, 0.25}
\newtheorem{theorem}{Theorem}[section]
\newtheorem{conjecture}[theorem]{Conjecture}
\newtheorem*{theorem*}{Theorem}
\theoremstyle{definition}
\theoremstyle{plain}
\newtheorem{prop}[theorem]{Proposition}
\theoremstyle{definition}
\newtheorem{definition}[theorem]{Definition}
\title{Sidorenko-Type Inequalities for Even Subdivisions over Finite Abelian Groups}
\author{Yuqi Zhao\thanks{Email: \texttt{yuqi.zhao012@gmail.com}}}
\date{}
\begin{document}

\maketitle

\begin{abstract}
Sidorenko’s conjecture states that, for every bipartite graph $H$ and any host graph $G$, the homomorphism density from $H$ to $G$ is asymptotically at least as large as in a quasirandom host graph with the same edge density as $G$. While the conjecture remains still very open, Szegedy showed that it suffices to verify the inequality when the host graph is a Cayley graph over a finite group.

In this paper, we prove that Sidorenko's conjecture holds for all even subdivisions of arbitrary graphs when the host graph is a Cayley graph over an abelian group. That is, if each edge of a graph is replaced by a path of even length (allowing different lengths for different edges), then the resulting graph satisfies the Sidorenko's inequality in any abelian Cayley host graph.

Our approach reduces the homomorphism count to the evaluation of certain averages over solution sets of linear systems over finite abelian groups, and proceeds using Fourier-analytic techniques. 
\end{abstract}
\section{Introduction}
One of the central problems in extremal graph theory is to determine the minimum possible number of copies of a given graph $H$ in graphs with a fixed edge density. A prominent conjecture, proposed independently by Sidorenko \cite{sidorenko1993correlation, sidorenko1991inequalities} and Erd\H{o}s and Simonovits \cite{erdos1984cube}, asserts that for every bipartite graph $H$, the minimum number of such copies is asymptotically attained by the random graph with the same edge density. 

For any graph $G$, let $V(G)$ denote its \emph{vertex set}, $E(G)$ denote its \emph{edge set}, and let $v(G) = |V(G)|$ and $e(G) = |E(G)|$ represent the number of vertices and edges, respectively. Formally, for graphs $H$ and $G$, a \emph{homomorphism} from $H$ to $G$ is a function $f: V(H) \to V(G)$ such that $f(u)f(v) \in E(G)$ whenever $uv \in E(H)$. Let $\hom(H, G)$ denote the number of homomorphisms from $H$ to $G$. 

The \emph{density} of $H$ in $G$ is defined as
\[
t(H, G) = \frac{\hom(H, G)}{v(G)^{v(H)}}.
\]
Sidorenko's conjecture is stated as follows:

\begin{conjecture}[Sidorenko's Conjecture]\label{SidorenkoConj}
For every bipartite graph $H$ and every graph $G$, we have 
    \begin{equation}\label{SidorenkoConje}
        t(H,G)\geq t(K_2,G)^{e(H)}.
    \end{equation}    
\end{conjecture}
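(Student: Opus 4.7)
The plan is to exploit the reduction cited in the abstract: by Szegedy's theorem it suffices to establish the inequality when $G = \mathrm{Cay}(\Gamma, S)$ is a Cayley graph on a finite group $\Gamma$ with symmetric generating set $S$. This moves the question into an algebraic setting where
\[
\hom(H,G) \;=\; \sum_{\phi : V(H) \to \Gamma}\, \prod_{uv \in E(H)} \mathbf{1}_S\bigl(\phi(u)\phi(v)^{-1}\bigr), \qquad t(K_2,G) \;=\; \tfrac{|S|}{|\Gamma|},
\]
so the conjecture becomes a statement about tensor contractions of the indicator $\mathbf{1}_S$ along the edge structure of $H$. It is natural to work with the mean-one function $f := \tfrac{|\Gamma|}{|S|}\mathbf{1}_S$, reducing the goal to $\mathbb{E}_{\phi}\prod_{uv\in E(H)}f(\phi(u)\phi(v)^{-1}) \ge 1$.

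Next I would apply a Fourier or representation-theoretic expansion of $f$: in the abelian case $f = \sum_{\chi \in \widehat{\Gamma}} \widehat{f}(\chi)\,\chi$, and in the general case one invokes Peter--Weyl over the irreducible representations of $\Gamma$. Substituting and exchanging the order of summation, the sum over each vertex's label produces orthogonality constraints: the characters attached to the edges meeting a given vertex must compose to the trivial character. The outcome is a weighted sum indexed by edge-labellings of $H$ satisfying a linear system encoded by the incidence structure of $H$. The all-trivial labelling contributes exactly $1$, so the task reduces to showing that the remaining contributions sum to a nonnegative quantity. A natural attempt uses the bipartition $(A,B)$ of $H$: fix the $A$-side labels, view the inner quantity as a norm-square in the $B$-side variables, and apply a Cauchy--Schwarz or H\"older-type inequality along one side of the bipartition, as in Hatami's work on norming graphs, the tree-like arguments of Conlon--Fox--Sudakov, and the even-subdivision analysis performed later in this paper.

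The main obstacle --- and the reason the conjecture has resisted attack since 1991 --- is that for a \emph{general} bipartite $H$ there is no known mechanism forcing the non-trivial Fourier contributions to cancel with the correct sign. Two very strong simplifications are crucial in the later sections of the present paper: the \emph{even-subdivision} hypothesis makes each Fourier coefficient appear with even multiplicity, so that the non-trivial terms assemble into a genuine sum of squares; and the \emph{abelian} hypothesis linearises the orthogonality constraints into a system over $\widehat{\Gamma}$. Dropping either simplification breaks the argument: a generic bipartite $H$ produces Fourier terms whose signs are uncontrolled, and a non-abelian $\Gamma$ introduces matrix-valued Fourier coefficients whose contributions are not manifestly real. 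I do not see how to overcome either obstruction inside the present Fourier-analytic framework, which is precisely why the paper restricts to even subdivisions over abelian Cayley graphs rather than attempting the full conjecture here.
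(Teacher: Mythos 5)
The statement you were asked to prove is Sidorenko's Conjecture itself, which the paper does not prove --- it is stated as a conjecture precisely because it remains open, and the paper only establishes the special case of even subdivisions over abelian Cayley graphs (Theorem~\ref{main}). Your proposal correctly declines to claim a proof, and your account of the landscape is accurate: the reduction to Cayley hosts is exactly the paper's Proposition following Szegedy's theorem, the Fourier expansion with vertex-orthogonality constraints is the mechanism behind Propositions~\ref{matrix} and~\ref{trantomatrix} (though the paper phrases the constraints via the circuit matrix and cycle space rather than the incidence structure directly --- these are dual descriptions of the same kernel), and your diagnosis of the two obstructions is precisely right: the even-subdivision hypothesis is what pairs each Fourier coefficient with its conjugate to produce a manifest sum of squares via inequality~\eqref{pair}, and the abelian hypothesis is what makes the character calculus scalar-valued. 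Since no complete proof is possible here, there is no gap to flag beyond the one you have already identified yourself; the honest conclusion is that the general conjecture is out of reach of this framework, which is consistent with the paper's own scope.
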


A graph $H$ satisfying this inequality is said to have \emph{Sidorenko's property}. When $H$ is a connected graph with $v(H) > e(H)$, it must be a tree, and Sidorenko~\cite{sidorenko1993correlation} proved that all trees satisfy Conjecture~\ref{SidorenkoConj}. If $H$ is disconnected with connected components $H_1, \dots, H_k$, then $t(H, G) = \prod_{i=1}^{k} t(H_i, G)$, and thus the conjecture reduces to proving the case where $H$ is connected with $e(H) \geq v(H)$.

In addition to trees, Sidorenko~\cite{sidorenko1993correlation} also proved that even cycles and complete bipartite graphs satisfy the conjecture. Subsequent works have extended the list of bipartite graphs known to have Sidorenko's property. Hatami~\cite{hatami2010graph} established the conjecture for hypercubes, and Conlon, Fox, and Sudakov~\cite{conlon2010approximate} verified it for bipartite graphs in which one vertex is complete to the other side. Conlon, Kim, Lee, and Lee~\cite{conlon2018some} proved the conjecture for a broad class of graphs known as strongly tree-decomposable graphs. More recently, Im, Li, and Liu~\cite{im2024sidorenko} showed that Sidorenko's property holds for graphs obtained by replacing each edge of an arbitrary graph with a generalized theta graph consisting of even-length paths.

Given a finite group $G$ and a symmetric subset $S \subseteq G \setminus \{e\}$ (i.e., $S = S^{-1}$), the \emph{Cayley graph} $\mathrm{Cay}(G, S)$ is the graph with vertex set $G$, where two elements $x, y \in G$ are adjacent if and only if $x^{-1}y \in S$. Equivalently, the edge set consists of all pairs $\{x, xs\}$ with $x \in G$ and $s \in S$.

In \cite{szegedy2015sparse}, Szegedy proved that to establish Sidorenko's conjecture, it suffices to verify inequality~\eqref{SidorenkoConje} when the host graph $G$ is vertex-transitive and edge-transitive. Note that \cite{lovasz2015automorphism} for any vertex transitive graph $G$, one can obtain a Cayley graph $G'$ from $G$ by replacing every vertex by $m$ vertices and every edge by a complete bipartite graph $K_{m,m}$, where the value $m$ is the size of the stabilizer of a vertex in $G$ in the automorphism group of graph $G$. Thus, we have the following proposition:

\begin{prop}
To prove Sidorenko's Conjecture, it is enough to show that for every bipartite graph $H$, every finite group $G$, and every symmetric subset $S \subseteq G \setminus \{e\}$, the inequality holds:
\[
   t(H, \mathrm{Cay}(G, S)) \geq t(K_2, \mathrm{Cay}(G, S))^{e(H)}.
\]
\end{prop}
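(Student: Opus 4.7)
The plan is to chain the two ingredients already recalled in the excerpt. First, by Szegedy's theorem \cite{szegedy2015sparse}, to prove Sidorenko's conjecture it suffices to verify inequality~\eqref{SidorenkoConje} when the host graph is both vertex- and edge-transitive. Second, by the construction of \cite{lovasz2015automorphism}, every such transitive graph $G$ admits a Cayley ``blow-up'' $G'$ obtained by replacing each vertex with $m$ copies and each edge with a complete bipartite graph $K_{m,m}$, where $m$ is the size of the vertex stabiliser in $\mathrm{Aut}(G)$. What remains is to verify that both sides of the Sidorenko inequality are invariant under this blow-up, so that the hypothesised inequality for $G'$ transfers back to $G$.

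The key step is establishing the density identity $t(F, G') = t(F, G)$ for an arbitrary simple graph $F$ (later applied to $F = H$ and $F = K_2$). I would prove this by an explicit lifting bijection. The blow-up admits a natural projection $\pi : V(G') \to V(G)$ that collapses each copy-class to its original vertex, under which every homomorphism $g : F \to G'$ descends to a homomorphism $\pi \circ g : F \to G$. Conversely, given a homomorphism $f : F \to G$, any assignment sending each $v \in V(F)$ to one of the $m$ copies of $f(v)$ produces a valid lift: for each edge $uv \in E(F)$, the fibres $\pi^{-1}(f(u))$ and $\pi^{-1}(f(v))$ span a $K_{m,m}$ in $G'$, so every choice of copies yields an edge. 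This gives
\[
\hom(F, G') = m^{v(F)} \cdot \hom(F, G),
\]
and dividing by $v(G')^{v(F)} = m^{v(F)} v(G)^{v(F)}$ yields $t(F, G') = t(F, G)$. Applying this with $F = H$ and $F = K_2$ and combining with the Cayley-graph hypothesis gives the desired inequality for $G$.

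There is no serious obstacle: the proposition is essentially bookkeeping of two pre-existing reductions. The one substantive piece of work is the assertion that the blow-up $G'$ genuinely carries a Cayley-graph structure, which is where edge-transitivity (and not merely vertex-transitivity) of $G$ enters; this is precisely the content of the result of Lovász cited in the excerpt, and I would simply invoke it rather than reprove it. A minor care point is that one should take $H$ and $G$ to be loopless simple graphs so that the fibre-wise lift argument does not require edges internal to a copy-class, which is consistent with the standing setup in the paper.
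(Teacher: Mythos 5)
Your proof is correct and follows exactly the outline the paper sketches: Szegedy's reduction to vertex- and edge-transitive hosts, followed by Lov\'asz's Cayley blow-up. The paper leaves the density-invariance $t(F,G')=t(F,G)$ under the $m$-fold blow-up implicit; your lifting bijection supplies precisely the bookkeeping the paper omits, and the care you take that the lift needs no edges inside a fibre (i.e.\ that $G$ is loopless and simple) is the only point where one could otherwise slip.
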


In this paper, we focus on the case where the host graph is a Cayley graph over a finite abelian group. This setting is particularly well suited to harmonic analysis techniques and exhibits a rich algebraic structure. A closely related line of investigation was carried out by Cho, Conlon, Lee, Skokan, and Versteegen~\cite{cho2024norming}, who studied the norming property of graphs in Cayley graphs over additive groups of finite fields, i.e., groups of the form $\mathbb{F}_q^n$, where $q$ is a prime power and $n$ a parameter. 

An important class of graphs that has attracted significant attention in connection with Sidorenko's conjecture is that of \emph{even subdivisions}. Given a graph~$H$, an \emph{even subdivision} of~$H$ is obtained by replacing each edge with a path of even length (not necessarily the same length for different edges). It is easy to observe that any such subdivision is bipartite, regardless of whether the original graph~$H$ is.

The Sidorenko property for various families of even subdivisions has been studied in depth in several works, including the recent papers of Conlon, Kim, Lee, and Lee~\cite{conlon2018some}, and of Im, Li, and Liu~\cite{im2024sidorenko}. For any graph~$H$, we refer to the graph obtained by replacing each edge with a path of length~$2$ as the \emph{standard subdivision} of~$H$. Very recently, Cho, Conlon, Lee, Skokan, and Versteegen \cite{cho2024norming} proved that in $\mathbb{F}_q^n$, subdivision of a weakly norming system is also weakly norming, and that the equations corresponding to a subdivision is positive. Besides, the following result is known:

\begin{theorem}[\cite{conlon2018some}]
\leavevmode
\begin{enumerate}
    \item For every positive integer~$k$, the standard subdivision of the complete graph~$K_k$ has Sidorenko's property.
    \item If a graph~$H$ has Sidorenko's property, then the standard subdivision of~$H$ also has Sidorenko's property.
\end{enumerate}
\end{theorem}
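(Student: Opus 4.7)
I would start with part (2), whose proof contains the ``compounding'' idea driving most results of this type. Let $H'$ denote the standard subdivision of $H$: $V(H') = V(H)\sqcup E(H)$, each edge $uv\in E(H)$ becomes a degree-$2$ midpoint adjacent to $u$ and $v$, so $v(H')=v(H)+e(H)$ and $e(H')=2e(H)$. For a host graph $G$ on $n$ vertices, summing over the midpoints first gives
\[
\hom(H',G) \;=\; \sum_{\phi:V(H)\to V(G)} \prod_{uv\in E(H)} \bigl|N_G(\phi(u))\cap N_G(\phi(v))\bigr|.
\]
This is $n^{e(H)}$ times a weighted homomorphism count from $H$ into the edge-weighted graph $\widetilde G$ on $V(G)$ with weight $w(x,y) := |N_G(x)\cap N_G(y)|/n$. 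Sidorenko's property transfers from simple graphs to nonnegatively weighted hosts by a routine graphon approximation, so the hypothesis on $H$ yields $t(H,\widetilde G)\geq t(K_2,\widetilde G)^{e(H)}$, while the Cauchy--Schwarz computation
\[
t(K_2,\widetilde G) \;=\; \frac{1}{n^3}\sum_{z\in V(G)}\deg_G(z)^2 \;\geq\; t(K_2,G)^2
\]
delivers $t(H',G)\geq t(K_2,G)^{2e(H)}=t(K_2,G)^{e(H')}$ after accounting for $v(H')=v(H)+e(H)$.

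Part (1) is more delicate, because $K_k$ is not bipartite for $k\geq 3$ and thus no Sidorenko hypothesis on $K_k$ is available to feed the compounding argument. The plan is to expand each co-degree via $|N(x_i)\cap N(x_j)| = \sum_{z_{ij}}\mathbf{1}[z_{ij}\sim x_i]\mathbf{1}[z_{ij}\sim x_j]$ and swap the order of summation to obtain the dual-looking identity
\[
\hom(H_k,G) \;=\; \sum_{(z_{ij})_{i<j}}\,\prod_{i=1}^{k} \Bigl|\bigcap_{j\neq i} N_G(z_{ij})\Bigr|,
\]
which reorganizes the count around the $k$ vertices of $K_k$, each carrying a $(k-1)$-wise common neighbourhood. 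I would then lower-bound this symmetric expression by iterated Cauchy--Schwarz, exploiting the key fact that every $z_{ij}$ appears in exactly two of the $k$ intersection factors, and would organize the argument as an induction on $k$ using the base case $k=2$ (the path of length four, a tree, and so Sidorenko).

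The main obstacle is calibrating exponents: a naïve application of Cauchy--Schwarz repeatedly loses factors of $n$, and one must apply the inequality in an order adapted to the $K_k$-symmetry so that the doubled indices balance against the $k(k-1)$ edges of $H_k$. A convenient way to manage the bookkeeping is to condition on one vertex-star of $H_k$ at a time and invoke Jensen in the form $\mathbb{E}_z f(z)^{k-1}\geq (\mathbb{E}_z f(z))^{k-1}$ on that star, then feed the induction hypothesis into the conditioned count. Verifying that the resulting chain of inequalities yields exactly $t(K_2,G)^{k(k-1)}$ with no surplus $n$-factors is the technical heart of the argument.
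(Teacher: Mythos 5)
The paper does not prove this theorem; it is cited from Conlon--Kim--Lee--Lee~\cite{conlon2018some}, so there is no in-paper proof to compare against. Evaluating your proposal on its own merits:

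Your argument for part~(2) is correct and complete, and it is essentially the argument used in the literature. The identity $t(H',G)=t(H,\widetilde G)$ for the codegree-weighted host, the transfer of Sidorenko's inequality from simple graphs to nonnegative weighted hosts, and the Cauchy--Schwarz step $t(K_2,\widetilde G)=\tfrac1{n^3}\sum_z\deg(z)^2\ge t(K_2,G)^2$ all check out, and the exponents balance because $v(H')=v(H)+e(H)$ and $e(H')=2e(H)$.

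Part~(1), however, is not a proof but a plan with a genuine gap, and you acknowledge as much (``the technical heart of the argument''). Your dual identity
\[
\hom(H_k,G)=\sum_{(z_{ij})_{i<j}}\ \prod_{i=1}^{k}\Bigl|\bigcap_{j\ne i}N_G(z_{ij})\Bigr|
\]
is correct, and after normalizing by $n^{v(H_k)}$ it reduces the problem to showing
\[
\mathbb{E}_{(z_{ij})}\prod_{i=1}^{k} h\bigl(z_{i,\cdot}\bigr)\ \ge\ \prod_{i=1}^{k}\mathbb{E}\bigl[h(z_{i,\cdot})\bigr]\ \ge\ p^{k(k-1)},
\qquad h(z_1,\dots,z_{k-1}):=\tfrac1n\Bigl|\bigcap_j N_G(z_j)\Bigr|,
\]
where the second inequality is easy (it is $\tfrac1{n^k}\sum_x\deg(x)^{k-1}\ge p^{k-1}$, raised to the $k$). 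The first inequality --- positive correlation of the $k$ random variables $h(z_{i,\cdot})$, each pair sharing exactly one of the i.i.d.\ coordinates $z_{ij}$ --- is the entire content of part~(1) for $k\ge 4$, and it does not follow from a generic ``iterated Cauchy--Schwarz'' or from Jensen applied one star at a time: the factor $h$ is a function of $k-1$ variables, not a single-variable power, so $\mathbb{E}_z f(z)^{k-1}\ge(\mathbb{E}_z f(z))^{k-1}$ does not directly apply, and naive Cauchy--Schwarz on shared coordinates squares one factor while deleting variables from others, which does not respect the $K_k$-symmetry you are relying on. For $k=3$ the subdivision is just $C_6$, so the base cases do not reveal the difficulty. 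The actual proof in~\cite{conlon2018some} uses an entropy/tensor argument tailored to this structure, not the route you sketch. To turn your outline into a proof you would need to supply and prove a concrete correlation or convexity lemma that drives the induction; as written, part~(1) is incomplete.
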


While these works establish the Sidorenko property for standard subdivisions under specific host graphs, it is still open for standard subdivisions under general host graphs. In this paper, we show that the Sidorenko inequality holds for arbitrary even subdivisions when the host graph is a Cayley graph over a finite abelian group. Our main result is as follows:

\begin{theorem}[Main result]\label{main}
Let $G$ be a finite abelian group and $S$ be a symmetric subset of $G$. Let $H_0$ be an arbitrary graph, and let $H$ be an even subdivision of $H_0$. Then the following inequality holds:
\[
        t(H,\mathrm{Cay}(G, S))\geq t(K_2,\mathrm{Cay}(G, S))^{e(H)}.
\]
\end{theorem}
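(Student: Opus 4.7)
The plan is to apply Fourier analysis on the finite abelian group $G$ and show that, after integrating out the subdivided paths, the homomorphism count becomes a sum of manifestly nonnegative terms, one of which already equals $t(K_2,\mathrm{Cay}(G,S))^{e(H)}$. The three features that cooperate to make this work are: $G$ abelian (so characters are one-dimensional), $S$ symmetric (so $\widehat{1_S}$ is real), and every subdivision length even (so $\widehat{1_S}$ appears only to even powers).

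First I would integrate out the internal vertices of the subdivision. For each edge $e = uv \in E(H_0)$, the path between $u$ and $v$ in $H$ has length $2\ell_e$ with $2\ell_e - 1$ internal vertices, and summing over their images in $G$ gives $(1_S)^{*2\ell_e}(f(v)-f(u))$, where $*$ denotes convolution on $G$. This reduces $\hom(H,\mathrm{Cay}(G,S))$ to
\[
\sum_{f \colon V(H_0) \to G}\,\prod_{e = uv \in E(H_0)} (1_S)^{*2\ell_e}\bigl(f(v)-f(u)\bigr).
\]

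Next I would Fourier-expand each convolution via $(1_S)^{*n}(x) = |G|^{n-1}\sum_{\chi \in \widehat{G}}\widehat{1_S}(\chi)^n \chi(x)$, introducing one character variable $\chi_e \in \widehat{G}$ per edge of $H_0$. After interchanging the sum over $f$ with the sum over character tuples, the sum over $f$ factors vertex-by-vertex: fixing an orientation of each edge, the contribution of $f(v)$ equals $|G|$ if the product of incident characters (with signs given by the orientation) is the trivial character of $\widehat{G}$, and equals $0$ otherwise. Let $\mathcal{F} \subseteq \widehat{G}^{E(H_0)}$ denote the set of tuples satisfying these vertex conditions simultaneously, which is the solution set of an explicit linear system over $\widehat{G}$. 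Tracking the powers of $|G|$, the identity becomes
\[
\hom(H,\mathrm{Cay}(G,S)) \;=\; |G|^{v(H)}\sum_{(\chi_e) \in \mathcal{F}}\prod_{e \in E(H_0)}\widehat{1_S}(\chi_e)^{2\ell_e}.
\]

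The final step is where the hypotheses close the argument: since $S = -S$, each $\widehat{1_S}(\chi) = \tfrac{1}{|G|}\sum_{s \in S}\chi(s)$ is real, and since each exponent $2\ell_e$ is even, every summand $\prod_e \widehat{1_S}(\chi_e)^{2\ell_e}$ is nonnegative. The trivial tuple $(\chi_e) = (1,\ldots,1)$ lies in $\mathcal{F}$ and contributes exactly $\widehat{1_S}(1)^{e(H)} = (|S|/|G|)^{e(H)} = t(K_2,\mathrm{Cay}(G,S))^{e(H)}$, so discarding the other (nonnegative) summands yields the desired Sidorenko inequality after dividing by $|G|^{v(H)}$. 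The main work is bookkeeping rather than a substantive obstacle: I expect the care-points to be pinning down the linear system defining $\mathcal{F}$ and correctly accounting for the $|G|^{v(H)}$ normalization in the Fourier expansion. The proof is rigid in its hypotheses, since the reality of $\widehat{1_S}$ requires $G$ abelian and $S$ symmetric, and pointwise nonnegativity of $\widehat{1_S}(\chi)^n$ requires $n$ even.
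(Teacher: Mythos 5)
Your proof is correct, and it takes a recognizably different route from the paper's, though the two arguments are ultimately isomorphic. The paper first observes that an even subdivision of $H_0$ is the \emph{standard} (length-$2$) subdivision of an intermediate graph $H_1$, then introduces the circuit matrix $L$ of $H$ (built from the circuit matrix of $H_1$ by doubling each column as $v, -v$), applies the identity $t(H,\mathrm{Cay}(G,S)) = \mathbb{E}_{\mathbf{x}\in\ker L}\prod_j f(x_j)$, and Fourier-expands via Proposition~\ref{matrix}; nonnegativity then comes from pairing the two edges of each length-$2$ path, so that each pair produces a factor $|\widehat{f}(\cdot)|^2$. You instead skip the reduction to standard subdivisions and the circuit-matrix formalism entirely: you integrate out the internal vertices of each subdivided path to get a convolution power $(1_S)^{*2\ell_e}$, Fourier-expand that, and let the summation over branch-vertex images produce the vertex flow-conservation constraints that carve out $\mathcal{F}$. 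Your nonnegativity is the evenness of the exponent $2\ell_e$ applied to the real number $\widehat{1_S}(\chi_e)$ rather than the conjugate pairing. The two final expressions coincide once one notices that, in the paper's parametrization, all edges along a subdivided path lie in exactly the same set of fundamental cycles and hence receive identical Fourier arguments, reproducing your repeated factor $\widehat{1_S}(\chi_e)^{2\ell_e}$; and that your cut-space description of $\mathcal{F}$ is the same subgroup as the paper's image of $\widehat{G}^m$ under the circuit matrix. What you gain is a shorter, more self-contained derivation that dispenses with the circuit-matrix machinery and with the $H_1$ reduction, and makes the role of the evenness hypothesis maximally transparent; what the paper's formalism gains is a uniform, reusable translation from homomorphism densities over abelian Cayley graphs to kernel averages (Propositions~\ref{matrix} and~\ref{trantomatrix}), which it can point to for other graph families. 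One small hygiene point in your write-up: the Fourier coefficient in the paper's convention carries a conjugate, $\widehat{1_S}(\chi) = \frac{1}{|G|}\sum_{s\in S}\overline{\chi(s)}$, but since $S=-S$ this agrees with what you wrote, and the reality claim is correct either way.
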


\section{Preliminaries}
\subsection{Fourier-analytic approach}
Let $G$ be a finite abelian group, we write $\widehat{G}$ for the dual group of $G$, i.e., the group of homomorphisms from $G$ to the multiplicative group $\mathbb{C}$ of complex numbers, which is easily seen to be isomorphic to $G$ itself.

Note that every finite abelian group $G$ is isomorphic to the group $\mathbb{Z}_{N_1} \times \cdots \times \mathbb{Z}_{N_k}$ for some positive integers $N_1, \dots, N_k$. Now let $G = \mathbb{Z}_{N_1} \times \cdots \times \mathbb{Z}_{N_k}$ for some positive integers $N_1, \dots, N_k$. For every $a = (a_1, \dots, a_k) \in G$, define $\chi_a \in \widehat{G}$ as the product of the characters $\chi_{a_1}, \dots, \chi_{a_k}$ of the groups $\mathbb{Z}_{N_1}, \dots, \mathbb{Z}_{N_k}$ applied to the coordinates of $x \in G$, respectively. More precisely,
\[
\chi_a(x) = \prod_{j=1}^k e^{2\pi i \frac{a_j x_j}{N_j}}.
\]
Let $f: G\to \mathbb{C}$ be a function, defining its Fourier transform as:
\[
\widehat{f}(\chi) = \langle f, \chi \rangle = \mathbb{E}[f(x)\overline{\chi(x)}].
\]
Note that for any $\chi_a \in \widehat{G}$ we have $\widehat{f}(\chi_a) =\overline{\widehat{f}((\chi_a)^{-1}) }=\overline{\widehat{f}(\chi_{-a}) }$ thus  
\begin{equation}\label{pair}
    \widehat{f}(\chi_a)\widehat{f}(\chi_{-a})\geq 0. 
\end{equation}
The Fourier coefficient $\widehat{f}(0)$ is of particular importance, as
\[
\widehat{f}(0) = \mathbb{E}[f(x)].
\]
In particular, if $\mathbf{1}_A$ is the indicator function of a subset $A \subseteq G$, then
\[
\widehat{\mathbf{1}_A}(0) = \frac{|A|}{|G|},
\]
which corresponds to the density of the set $A$ in $G$.

We have the Fourier inversion formula
\[
f(x) = \sum_{a \in G} \widehat{f}(a) \chi_a(x),
\]
and that this expansion of $f$ as a linear combination of characters is unique. 

Following a similar argument as in~\cite{cho2024norming}, we have the following:

\begin{prop}\label{matrix}
Let $L$ be an $m \times k$ matrix with entries in $\{0, 1, -1\}$, where $m \leq k$ and $\operatorname{rank}(L) = m$. We have
\[
\mathbb{E}_{(x_1, \dots, x_k) \in \ker(L)} f(x_1) \cdots f(x_k) = \sum_{(\xi_1, \dots, \xi_m) \in \widehat{G}^m} \prod_{j=1}^k \widehat{f} \left( \sum_{i=1}^m L_{ij} \xi_i \right).
\]
\end{prop}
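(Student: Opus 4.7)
Plan: The identity is a Fourier-analytic restatement of character orthogonality on the subgroup $\ker(L) \subseteq G^k$. My strategy is to expand each factor $f(x_j)$ on the left-hand side using the Fourier inversion formula, swap the resulting finite sum with the expectation over $\ker(L)$, and then recognize the inner average as the indicator that a certain character of $G^k$ is trivial on the subgroup $\ker(L)$.

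Concretely, substituting $f(x_j) = \sum_{a_j \in \widehat{G}} \widehat{f}(a_j)\chi_{a_j}(x_j)$ into each factor and interchanging the finite sum over $(a_1,\dots,a_k) \in \widehat{G}^k$ with the expectation, the left-hand side becomes
\[
\sum_{(a_1,\dots,a_k) \in \widehat{G}^k} \Bigl(\prod_{j=1}^k \widehat{f}(a_j)\Bigr)\, \mathbb{E}_{x \in \ker(L)}\,\prod_{j=1}^k \chi_{a_j}(x_j).
\]
The inner expectation is the average of the character $\chi_{(a_1,\dots,a_k)}$ of $G^k$ restricted to the subgroup $H := \ker(L)$; by the standard orthogonality relation this equals $1$ if $\chi_{(a_1,\dots,a_k)}$ is trivial on $H$ and $0$ otherwise. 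So only tuples $a = (a_1,\dots,a_k) \in H^\perp$ survive.

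The remaining step is to identify $H^\perp$ with $\mathrm{im}(L^T)$ via Pontryagin duality. The inclusion $\mathrm{im}(L^T) \subseteq H^\perp$ is immediate from the manipulation $\chi_{L^T\xi}(x) = \chi_\xi(Lx) = 1$ whenever $Lx = 0$. For the reverse inclusion I would compare sizes: by duality $|H^\perp| = |G^k|/|H| = |\mathrm{im}(L)|$, and under the hypothesis $\mathrm{rank}(L) = m$ interpreted so that $L^T \colon \widehat{G}^m \to \widehat{G}^k$ is injective, the right-hand side equals $|\widehat{G}|^m = |\mathrm{im}(L^T)|$. Reparametrizing the surviving tuples by $\xi \in \widehat{G}^m$ through $a = L^T\xi$ yields $a_j = \sum_{i=1}^m L_{ij}\xi_i$, matching the right-hand side of the claimed identity.

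The main delicate point, and the only part that is not a formal Fourier computation, is the rank hypothesis: for an arbitrary finite abelian $G$, a $\{0,\pm 1\}$-matrix with $\mathbb{Z}$-rank $m$ need not induce an injective map on $\widehat{G}^m$ (e.g.\ torsion phenomena over small cyclic factors). Thus the hypothesis ``$\mathrm{rank}(L) = m$'' must be read as asserting injectivity of $L^T$ over $\widehat{G}$, which, for the incidence-type matrices $L$ arising in the subdivided-graph applications later in the paper, can be checked directly from their combinatorial structure. Once injectivity is granted, the expansion above collapses exactly to the stated formula.
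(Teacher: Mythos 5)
Your proof is correct and takes a genuinely different, dual route to the one in the paper. The paper keeps $f$ untouched and instead Fourier-expands the indicator $\mathbf{1}_{\ker L}$ as a character sum $|G|^{-m}\sum_{\boldsymbol{\xi}}\prod_i \chi_{\xi_i}(L_i(\boldsymbol{x}))$, then pulls the $x_j$-sums inside to produce the Fourier coefficients; you instead expand each $f(x_j)$ by Fourier inversion, apply orthogonality of characters on the subgroup $\ker L\subseteq G^k$, and identify the annihilator $(\ker L)^\perp$ with $\mathrm{im}(L^T)$ via Pontryagin duality. Both computations encounter exactly the same normalization issue: in the paper's version the step from $\frac{1}{|\ker L|}\sum_{\boldsymbol{x}\in G^k}$ to the stated sum silently uses $|\ker L|=|G|^{k-m}$, which is equivalent to surjectivity of $L\colon G^k\to G^m$, which is in turn equivalent to the injectivity of $L^T\colon \widehat{G}^m\to\widehat{G}^k$ that you invoke. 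Your closing remark that ``$\mathrm{rank}(L)=m$'' over $\mathbb{Z}$ is not by itself enough for an arbitrary finite abelian $G$ is a genuine and worthwhile observation: one needs the gcd of the $m\times m$ minors of $L$ to be $1$ (equivalently, torsion-free cokernel over $\mathbb{Z}$), and the proposition as stated is slightly imprecise on this point. This stronger condition does hold in the paper's application, since circuit matrices of graphs are totally unimodular, so every nonsingular $m\times m$ submatrix has determinant $\pm1$; it would strengthen the paper to state this hypothesis explicitly. One small bookkeeping point you should make explicit if you write this up: $\widehat f$ is defined with a conjugate, so the inversion formula produces $\chi_{a_j}$ while $\widehat f(a_j)$ pairs with $\overline{\chi_{a_j}}$; the resulting global sign is absorbed because $H^\perp$ (and likewise $\widehat{G}^m$) is closed under negation, the same silent cancellation the paper relies on.
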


\begin{proof}
Let $G$ be a finite abelian group and $L\in\{0,1,-1\}^{\,m\times k}$.
For $\boldsymbol{x}=(x_1,\dots,x_k)\in G^{k}$ and every $i\in [m]$ write
\[
   L_i(\boldsymbol{x}) \;:=\; \sum_{j=1}^{k} L_{ij}x_j \in G
   \qquad(1\le i\le m).
\]

Since 
\[
\frac{1}{|G|}\sum_{\xi\in\widehat{G}}\chi_{\xi}(y)
=\delta_{0}(y)
=\begin{cases}
1, & y = 0,\\[2pt]
0, & y \neq 0,
\end{cases}
\]
we have
\[
   \mathbf 1_{\ker L}(\boldsymbol{x})
   \;=\;
   \prod_{i=1}^{m}\delta_{0}\!\bigl(L_i(\boldsymbol{x})\bigr)
   \;=\;
   |G|^{-m}
   \sum_{(\xi_1,\dots,\xi_m)\in\widehat{G}^{\,m}}
   \prod_{i=1}^{m}\chi_{\xi_i}\bigl(L_i(\boldsymbol{x})\bigr).
\]

Note that, for each $i\in[m]$,
\[
   \chi_{\xi_i}\!\bigl(L_i(\boldsymbol{x})\bigr)
   = \prod_{j=1}^{k}\chi_{\xi_i}\!\bigl(L_{ij}x_j\bigr)
   = \prod_{j=1}^{k}\chi_{\xi_i}(x_j)^{\,L_{ij}}.
\]
Hence
\[
   \prod_{i=1}^{m}\chi_{\xi_i}\!\bigl(L_i(\boldsymbol{x})\bigr)
   = \prod_{i=1}^{m}\,\prod_{j=1}^{k}\chi_{\xi_i}(x_j)^{\,L_{ij}}
   = \prod_{j=1}^{k}\,\prod_{i=1}^{m}\chi_{\xi_i}(x_j)^{\,L_{ij}}
   = \prod_{j=1}^{k}\chi_{\sum_{i=1}^{m}L_{ij}\xi_i}(x_j).
\]

Denote by $\mathbb{E}_{\ker L}$ the uniform average over $\ker L$.
Writing this average as a sum over $G^{k}$ and using the above formula gives
\[
   \mathbb{E}_{\boldsymbol{x}\in\ker L}\prod_{j=1}^{k}f(x_j)
   =\frac{1}{|\ker L|}\sum_{\boldsymbol{x}\in G^{k}}
      \mathbf 1_{\ker L}(\boldsymbol{x})\prod_{j=1}^{k}f(x_j)
   =\sum_{(\xi_1,\dots,\xi_m)\in\widehat{G}^{\,m}}
     \prod_{j=1}^{k}\widehat f\!\Bigl(\sum_{i=1}^{m}L_{ij}\xi_i\Bigr),
\]
thus the proposition holds.

\end{proof}

\subsection{The circuit matrix}\label{subsec:circuit-matrix}

Let $H = (V, E)$ be a finite \emph{connected} graph with $|V| = n$ and $|E| = k$.  
Fix a spanning tree $T \subseteq E$. For each edge
\(
e \in E \setminus T,
\)  
there exists a unique \emph{fundamental cycle}  
\(C_e \subseteq E\),  
formed by adding the edge $e$ to the tree $T$.

\begin{definition}[Circuit matrix]
Let $H$ be a bipartite graph and $m=k-n+1$ be the \emph{cyclomatic number} of~$H$
(the minimum number of edges whose removal makes $H$ acyclic).
Order the chords
\(E\setminus T=\{e_{1},\dots,e_{m}\}\)
and their fundamental cycles
\(\mathcal{B}=\{C_{e_{1}},\dots,C_{e_{m}}\}\). For each $i\in [m]$ giving a proper edge coloring of cycle $C_{e_{i}}$ by using $2$ colors $i_1$ and $i_{-1}$.
The \emph{circuit matrix} of $H$ is the matrix
\[
L
   \;=\;
   \bigl(L_{ij}\bigr)
   \;\in\;\{-1,0,1\}^{\,m\times k},
\quad
m=k-n+1,
\]
whose rows are indexed by $\mathcal{B}$, columns are indexed by $E$, and whose entries are
\[
L_{ij}\;=\;
\begin{cases}
+1 &\text{if $j\in C_{e_i}$ and it is colored by $i_1$},\\[3pt]
-1 &\text{if $j\in C_{e_i}$ and it is colored by $i_{-1}$},\\[3pt]
0  &\text{if $j\notin C_{e_i}$}.
\end{cases}
\]
\end{definition}

The circuit matrix formalism will be essential in the sequel when we translate combinatorial constraints on homomorphisms into linear conditions suitable for Fourier–analytic treatment.

\begin{prop}\label{trantomatrix}
    Let $G$ be a finite abelian group and $S$ be a symmetric subset of $G$, and $f=\mathbf{1}_S$. Let $H$ be a bipartite graph on $n$ vertices and $k$ edges, and $L$ be its circuit matrix. The following holds:
    \[
    t(H,\mathrm{Cay}(G, S))=\mathbb{E}_{(x_1, \dots, x_k) \in \ker(L)} f(x_1) \cdots f(x_k).
    \]
\end{prop}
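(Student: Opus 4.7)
The plan is to realise each homomorphism $\phi:V(H)\to\mathrm{Cay}(G,S)$ as a pair consisting of an ``anchor'' value at one vertex and a tuple of \emph{edge differences}, and then verify that the set of admissible edge-difference tuples is exactly $\ker(L)$. The point is that the admissibility constraint $\phi(u)\phi(v)\in E(\mathrm{Cay}(G,S))$ becomes simply $x_{uv}\in S$, i.e.\ $f(x_{uv})=1$, which factorises over the edges and couples cleanly with the Fourier tool of Proposition~\ref{matrix}.

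Concretely, I would fix a bipartition $V(H)=A\sqcup B$ and orient every edge from $A$ to $B$; for each edge $e_j$ oriented $u_j\to v_j$, set $\pi_j(\phi)=\phi(v_j)-\phi(u_j)\in G$. By the definition of $\mathrm{Cay}(G,S)$ together with the symmetry of $S$, a map $\phi$ is a homomorphism iff $\pi_j(\phi)\in S$ for every $j$. For each fundamental cycle $C_{e_i}$ I would choose the proper $2$-colouring appearing in the definition of $L$ to be the one induced by the bipartite orientation: an edge receives colour $i_{+1}$ if it is traversed forwards along $C_{e_i}$ and $i_{-1}$ otherwise. Because $H$ is bipartite, $C_{e_i}$ has even length and its edges do alternate between forwards and backwards, so this is a proper $2$-colouring. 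With this convention the signed sum $\sum_{j}L_{ij}\pi_j(\phi)$ telescopes to $0$ around $C_{e_i}$, so $(\pi_1(\phi),\dots,\pi_k(\phi))\in\ker(L)$.

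Next I would fix a root vertex $v_0$ and argue that $\phi\mapsto\bigl(\phi(v_0),\pi_1(\phi),\dots,\pi_k(\phi)\bigr)$ is a bijection between $G^{V(H)}$ and $G\times\ker(L)$. Injectivity is immediate: once $\phi(v_0)$ is known, the spanning tree $T$ determines $\phi$ on every other vertex by successive differences along tree edges. For surjectivity, given $(g_0,x)\in G\times\ker(L)$, the reconstruction along $T$ is well defined, and the remaining chord conditions $\pi_{e_i}(\phi)=x_{e_i}$ hold because the $i$-th row of $Lx=0$ is precisely the telescoping identity around $C_{e_i}$. Restricting the bijection to homomorphisms gives
\[
\hom(H,\mathrm{Cay}(G,S))=|G|\cdot\sum_{x\in\ker L}\prod_{j=1}^k f(x_j).
\]

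To close the argument I would observe that each chord $e_i$ lies in $C_{e_i}$ alone among the fundamental cycles, so the $m\times m$ submatrix of $L$ on the chord columns is diagonal with $\pm 1$ entries; in particular $L:G^k\to G^m$ is surjective and $|\ker L|=|G|^{k-m}=|G|^{n-1}$. Dividing the above identity by $|G|^n=v(\mathrm{Cay}(G,S))^{v(H)}$ then converts the inner sum into the uniform average over $\ker L$ and yields the claimed formula. The only genuine technical point is aligning the $\pm 1$ pattern of the circuit matrix with a global orientation so that the cycle sums telescope, and this is precisely where the bipartite hypothesis on $H$ is used.
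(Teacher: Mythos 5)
Your proposal is correct and follows essentially the same strategy as the paper: parametrise maps $V(H)\to G$ by a root value together with the tuple of edge differences, observe that the admissibility condition factorises as $\prod_j f(x_j)$, and identify the set of attainable difference-tuples with $\ker(L)$. The paper phrases this via the signed incidence matrix $M$ and the identity $\mathrm{Im}(M)=\ker(L)$, obtained by one inclusion plus a ``dimension'' count; your version unwinds this into an explicit bijection $G^{V(H)}\leftrightarrow G\times\ker(L)$ via spanning-tree reconstruction, which is the concrete form of the same argument.

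Two places where your write-up is actually more careful than the paper's. First, you pin down the proper $2$-colouring in the definition of $L$ as the one induced by the $A\to B$ orientation and check that it is proper because cycles in a bipartite graph alternate forward/backward edges; this is what makes the telescoping identity $\sum_j L_{ij}\pi_j(\phi)=0$ legitimate, whereas the paper simply asserts that the image of $M$ ``clearly satisfies all cycle constraints'' without reconciling the sign conventions of $L$ and $M$. Second, over a general finite abelian group the statement $\dim(\mathrm{Im}(M))=n-1=\dim(\ker L)$ is not literally meaningful; what one needs is the cardinality identity $|\ker L|=|G|^{n-1}$, and your observation that the chord columns of $L$ form a diagonal $\pm1$ submatrix (hence $L:G^k\to G^m$ is surjective, so $|\ker L|=|G|^{k-m}$) supplies exactly the argument the paper leaves implicit. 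Both are small points, but they close genuine gaps in the published proof rather than merely restating it.
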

\begin{proof}
Let $V(H) = W \cup U$ be a fixed bipartition with $W = \{1, \dots, n_1\}$ and $U = \{n_1 + 1, \dots, n\}$. Then, by the definition of homomorphism density,
\[
t(H, \mathrm{Cay}(G, S))
= \mathbb{E}_{(v_1, \dots, v_n) \in G^n}
\prod_{\substack{i \in W,\; j \in U \\ \{i,j\} \in E(H)}}
f(v_i - v_j).
\]
Now consider the \emph{signed incidence matrix} $M \in \mathbb{Z}^{k \times n}$ of $H$, defined by:
\[
m_{e,v} =
\begin{cases}
+1 & \text{if edge $e$ is incident to vertex $v \in W$}, \\
-1 & \text{if edge $e$ is incident to vertex $v \in U$}, \\
0 & \text{otherwise}.
\end{cases}
\]
We index the rows of $M$ by the edges $e_1, \dots, e_k$ and the columns by the vertices $1, \dots, n$.

Given a vector $\boldsymbol{x} = (x_1, \dots, x_n) \in G^n$, the vector $M\boldsymbol{x} \in G^k$ has entries
\[
(M\boldsymbol{x})_e = x_i - x_j \quad \text{for each edge } e = \{i, j\} \text{ with } i \in W,\, j \in U.
\]
Hence,
\[
t(H, \mathrm{Cay}(G, S)) = \mathbb{E}_{\boldsymbol{x} \in G^n} \prod_{e \in E(H)} f\big((M\boldsymbol{x})_e\big).
\]

Let $\Phi: G^n \to G^k$ be the map $\Phi(\boldsymbol{v}) := M\boldsymbol{v}$. Then we have:
\[
t(H, \mathrm{Cay}(G, S)) = \mathbb{E}_{\boldsymbol{x} \in \mathrm{Im}(M)} f(x_1)\cdots f(x_k).
\]

It remains to show that $\mathrm{Im}(M) = \ker(L)$. First, since each row of $L$ corresponds to a cycle in $H$, and the image of $M$ clearly satisfies all cycle constraints, we have:
\[
\mathrm{Im}(M) \subseteq \ker(L).
\]

Moreover, both spaces have the same dimension:
\[
\dim (\mathrm{Im}(M)) = n - 1 = \dim (\ker(L)),
\]
since $H$ is connected and $\operatorname{rank}(L) = k - (n - 1)$. Therefore, equality holds:
\[
\mathrm{Im}(M) = \ker(L).
\]

This completes the proof.
\end{proof}

\section{Proof of the main result}
Let $H_0$ be an arbitrary graph with edge set $\{e_1, e_2, \dots, e_k\}$, and let $\{m_1, m_2, \dots, m_k\}$ be a set of positive integers. Define $H_1$ to be the graph obtained from $H_0$ by replacing each edge $e_i$ with a path of length $m_i$ for every $i \in [k]$. Let $H$ be the graph obtained from $H_0$ by replacing each edge $e_i$ with a path of length $2m_i$. 

Note that $H$ is the standard subdivision of $H_1$. Therefore, to prove Theorem~\ref{main}, it suffices to establish the following proposition:

\begin{prop}
Let $G$ be a finite abelian group and $S$ be a symmetric subset of $G$. Let $H_1$ be an arbitrary graph, and let $H$ be its standard subdivision. Then the following inequality holds:
\[
        t(H,\mathrm{Cay}(G, S))\geq t(K_2,\mathrm{Cay}(G, S))^{e(H)}.
\]
\end{prop}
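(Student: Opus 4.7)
Write $f=\mathbf{1}_S$ and let $k=e(H_1)$, so $e(H)=2k$. Assuming (by multiplicativity of $t(\cdot,G)$ across connected components) that $H_1$ is connected on $n$ vertices, Propositions~\ref{trantomatrix} and~\ref{matrix}, applied to any spanning tree of $H$ with associated circuit matrix $L\in\{0,\pm 1\}^{m\times 2k}$ (with $m=k-n+1$), give
\[
t(H,\mathrm{Cay}(G,S)) \;=\; \sum_{\xi\in\widehat{G}^{\,m}} \prod_{j=1}^{2k} \widehat{f}\Bigl(\sum_{i=1}^{m} L_{ij}\xi_i\Bigr).
\]
The summand at $\xi=0$ is exactly $\widehat{f}(0)^{2k}=t(K_2,\mathrm{Cay}(G,S))^{e(H)}$, so the entire proposition reduces to showing that every summand on the right is nonnegative.

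To achieve this, the plan is to choose the spanning tree of $H$ and the two-colourings of its fundamental cycles so that, for every original edge $e\in E(H_1)$, the two half-edges $e^{+},e^{-}$ of the length-two path subdividing $e$ satisfy $L_{i,e^{+}}=-L_{i,e^{-}}$ in every row $i$. Concretely, I will fix a spanning tree $T_1$ of $H_1$ and take $T\subset E(H)$ to consist of both halves of every edge of $T_1$ together with one (arbitrarily chosen) half of every chord of $T_1$. A short check (midpoints corresponding to chords of $T_1$ have degree one in $T$, so any cycle in $T$ would project to a cycle in $T_1$) shows that $T$ is a spanning tree of $H$, with chord set exactly $\{e^{-}:e\in E(H_1)\setminus T_1\}$. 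The fundamental cycle of such a chord $e^{-}$ uses $e^{-}$, its partner $e^{+}\in T$, and both halves of every tree edge of $T_1$ on the fundamental cycle of $e$ in $H_1$; in particular, any edge of $H_1$ that the cycle touches contributes both of its halves. Since $e^{+}$ and $e^{-}$ meet at the subdivision midpoint $w_e$, they are consecutive in this even-length cycle, and so the proper two-edge colouring assigns them opposite colours, giving the pairing.

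With this pairing in hand, for every $\xi\in\widehat{G}^{\,m}$ we have $(L\xi)_{e^{+}}=-(L\xi)_{e^{-}}$, so
\[
\prod_{j=1}^{2k}\widehat{f}\bigl((L\xi)_j\bigr) \;=\; \prod_{e\in E(H_1)} \widehat{f}\bigl((L\xi)_{e^{+}}\bigr)\,\widehat{f}\bigl(-(L\xi)_{e^{+}}\bigr) \;\geq\; 0
\]
by~\eqref{pair}. Summing over $\xi$ yields $t(H,\mathrm{Cay}(G,S))\geq\widehat{f}(0)^{2k}$. The main technical content is therefore the construction of the spanning tree together with the verification of the column-pairing property of $L$; once that is set up, the inequality is an immediate consequence of the elementary positivity~\eqref{pair} of real Fourier coefficients. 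The thing to be most careful about is the fundamental-cycle analysis relative to $T$, namely that the sign pattern on each pair $(e^{+},e^{-})$ is opposite in every row regardless of which half of each chord is placed in $T$ and of how each cycle is individually two-coloured.
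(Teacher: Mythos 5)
Your proposal is correct and follows essentially the same route as the paper: express $t(H,\mathrm{Cay}(G,S))$ via the circuit matrix of $H$ and Proposition~\ref{matrix}, arrange for the two half-edges of each subdivided edge to occupy opposite columns of the circuit matrix, and conclude each Fourier term is a nonnegative product of conjugate pairs so that dropping everything but the $\xi=0$ term gives $\widehat{f}(0)^{2k}$. The one point where you are more careful than the paper is worth keeping: the paper asserts that doubling the columns of $L_1$ (``the circuit matrix of $H_1$,'' which by the paper's own definition is only defined for bipartite $H_1$) yields a circuit matrix of $H$, without exhibiting a spanning tree or checking the sign pattern at the original vertices; your explicit choice of $T$ (both halves of each $T_1$-edge plus one half of each chord) and the observation that $e^{+},e^{-}$ are consecutive at $w_e$ in each fundamental cycle actually justify that claim.
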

\begin{proof}
Let $L_1$ be the circuit matrix of $H_1$, with $m$ rows and $k$ columns. Construct an $m \times 2k$ matrix $L$ by replacing each column $v$ of $L_1$ with two columns $v$ and $-v$. Since $H$ is obtained from $H_1$ by subdividing each edge exactly once, each original edge is replaced by a path of length 2. If such a path lies in a properly colored cycle, then the two edges in the path are assigned different colors. By the definition of circuit matrix, $L$ is indeed a circuit matrix of~$H$.

Let $f=\mathbf{1}_S$. By Propositions~\ref{trantomatrix} and~\ref{matrix}, we have
\[
\begin{aligned}
    t(H,\mathrm{Cay}(G, S)) &= \mathbb{E}_{(x_1, \dots, x_{2k}) \in \ker(L)} f(x_1) \cdots f(x_{2k}) \\
    &= \sum_{(\xi_1, \dots, \xi_m) \in \widehat{G}^m} \prod_{j=1}^{2k} \widehat{f} \left( \sum_{i=1}^m L_{ij} \xi_i \right).
\end{aligned}
\]

Observe that for each $j \in [k]$, the columns $2j - 1$ and $2j$ of $L$ are negatives of each other. Therefore,
\[
\sum_{i=1}^m L_{i, 2j-1} \xi_i + \sum_{i=1}^m L_{i, 2j} \xi_i = 0,
\]
so the two Fourier coefficients are complex conjugates. By (\ref{pair}) it follows that for all $(\xi_1, \dots, \xi_m) \in \widehat{G}^m$,
\[
\begin{aligned}
\prod_{j=1}^{2k} \widehat{f} \left( \sum_{i=1}^m L_{ij} \xi_i \right)
&= \prod_{j=1}^{k} \left( \widehat{f} \left( \sum_{i=1}^m L_{i, 2j-1} \xi_i \right) \cdot \widehat{f} \left( \sum_{i=1}^m L_{i, 2j} \xi_i \right) \right) \\
&= \prod_{j=1}^{k} \left| \widehat{f} \left( \sum_{i=1}^m L_{i, 2j} \xi_i \right) \right|^2 \geq 0.
\end{aligned}
\]

In particular, this shows that all terms in the sum are non-negative. Thus,
\[
\begin{aligned}
\sum_{(\xi_1, \dots, \xi_m) \in \widehat{G}^m} \prod_{j=1}^{2k} \widehat{f} \left( \sum_{i=1}^m L_{ij} \xi_i \right)
&= \prod_{j=1}^{2k} \widehat{f}(0) + \sum_{(\xi_1, \dots, \xi_m) \neq (0, \dots, 0)} \prod_{j=1}^{2k} \widehat{f} \left( \sum_{i=1}^m L_{ij} \xi_i \right) \\
&\geq \widehat{f}(0)^{2k} = \left( \frac{|S|}{|G|} \right)^{2k} = t(K_2,\mathrm{Cay}(G, S))^{2k}.
\end{aligned}
\]

This completes the proof.
\end{proof}
Note that the preceding argument shows the following.  
If there exist $\varepsilon>0$ and a non-trivial character $\chi\in\widehat{G}$ such that
\[
   |\widehat{f}(\chi)| \ge \varepsilon\,\widehat{f}(0),
\]
then the Sidorenko-type inequality is strict:
\[
   t(H, \mathrm{Cay}(G, S))
   \ge
   t(K_2, \mathrm{Cay}(G, S))^{e(H)}
   (1 + \varepsilon^{e(H)}).
\]

Conversely, fix any $\varepsilon > 0$ and put $\delta = \varepsilon^{e(H)} > 0$.  
If
\[
   t(H, \mathrm{Cay}(G, S))
   \le
   t(K_2, \mathrm{Cay}(G, S))^{e(H)} (1 + \delta),
\]
then for \emph{every} character $\chi \in \widehat{G}$ we must have  
\begin{equation}\label{rationf0}
       |\widehat{f}(\chi)| \le \varepsilon\,\widehat{f}(0).
\end{equation}

It is a basic fact that for abelian Cayley graphs, the eigenvalues are given by
\[
   \lambda_\chi
   = \sum_{g \in S} \chi(g)
   = |G|\,\widehat{f}(\chi),
   \qquad \chi \in \widehat{G}.
\]
In view of \eqref{rationf0}, this implies that every non-principal eigenvalue must satisfy
\[
   |\lambda_\chi| \le \varepsilon\,\frac{|S|}{|G|}.
\]

In other words, any abelian Cayley graph that nearly minimises the homomorphism density 
$t(H, \mathrm{Cay}(G, S))$ must be spectrally quasirandom: 
all its non-trivial eigenvalues are $o(|S|/|G|)$, 
and the graph therefore behaves much like a random $d$-regular graph 
at edge density $p = |S|/|G|$.

\section{Acknowledgments}

The author is grateful to Professor David Conlon for his insightful comments and valuable suggestions.

\bibliographystyle{abbrv}
\bibliography{ref}

\end{document}